\documentclass[12pt]{amsart}

\usepackage{comment}
\newcommand{\disp}{\displaystyle}
\newcommand{\nc}{\newcommand}
\nc{\G}{{\Gamma}} \nc{\BC}{{\mathbb C}} \nc{\BQ}{{\mathbb Q}}
\nc{\BR}{{\mathbb R}} \nc{\BZ}{{\mathbb Z}} \nc{\BP}{{\mathbb P}} \nc{\PC}{{\BP_1(\BC)}}
\nc{\BN}{{\mathbb N}} \nc{\BM}{{\mathbb M}}
\nc{\fH}{{\mathbb H}}

\nc{\mat}{{\binom{a\,\ b}{c\,\ d}}}
\nc{\U}{{\mathcal U}}
\nc{\PS}{{\mbox{PSL}_2(\BZ)}} \nc{\SL}{{\mbox{SL}_2(\BZ)}}
\nc{\SR}{{\mbox{SL}_2(\BR)}} \nc{\PR}{{\mbox{PSL}_2(\BR)}}
\nc{\SLC}{{\mbox{SL}_2(\BC)}}

\nc{\GL}{{\mbox{GL}}} \nc{\PQ}{{\mbox{PGL}_2^+(\BQ)}}
\nc{\GR}{{\mbox{GL}_2^+(\BR)}} \nc{\PG}{{\mbox{PGL}_2(\BC)}}
\nc{\GC}{{\mbox{GL}_2(\BC)}}
\nc{\f}{{\mathcal{F}(\fH)}}
\nc{\Cc}{\widehat{\BC}}
\nc{\e}{{E_{\varrho}(\G)}}
\nc{\g}{{\gamma}}
\nc{\vm}{{V_{\varrho}(\G)}}
\nc{\oo}{{\mathcal O}}
\nc{\M}{{\mbox{M}}}
\nc{\om}{{\omega}}
\nc{\Om}{{\Omega}}
\nc{\TX}{{\widetilde{X}}}
\nc{\ol}{\overline}
\nc{\cl}{{\mathcal L}}
\nc{\ce}{{\mathcal E}}
\nc{\la}{{\lambda}}
\nc{\La}{{\Lambda}}

\nc{\cz}{{\mathcal Z}}

\newtheorem{numbered}{}[section]
\newtheorem{thm}[numbered]{Theorem}

\setlength{\parskip}{4pt}

\numberwithin{equation}{section}

\newcommand{\thmref}[1]{Theorem~\ref{#1}}

\begin{document}
	
	\title[]{Hypergeometric solutions to Schwarzian equations}
	\author[]{Khalil Besrour} \author[]{Abdellah Sebbar}

	\address{Department of Mathematics and Statistics, University of Ottawa,
		Ottawa Ontario K1N 6N5 Canada}
	\email{kbesr067@uottawa.ca}
	\email{asebbar@uottawa.ca}
	\subjclass[2010]{11F03, 11F11, 34M05.}
	\keywords{Modular differential equations, Schwarz derivative, Modular forms, Eisenstein series, Equivariant functions, Representations of the modular group}
\maketitle
\begin{abstract} In this paper we study the modular differential equation $y''+s\,E_4\, y=0$ where $E_4$ is the weight 4 Eisenstein series and $s=\pi^2r^2$ with $r=n/m$ being a rational number in reduced form such that $m\geq 7$. This study is carried out by solving the associated Schwarzian equation $\{h,\tau\}=2\,s\,E_4$ and using the theory of equivariant functions on the upper half-plane and the 2-dimensional vector-valued modular forms. The solutions are expressed in terms of the Gauss hypergeometric series. This completes the study of the above-mentioned modular differential equation of the associated Schwarzian equation given that the cases $1\leq m\leq 6$ have already been treated in \cite{forum,ramanujan, jmaa,preprint}.
\end{abstract}
\section{Introduction}
A second order modular differential equation of weight $k\in\BZ$ is, according to \cite{ka-ko,ka-za}, a differential equation on $\fH=\{\tau\in\BC\,:\ \mbox{Im}(\tau)>0\}$ of the form
\[
y''+A(\tau)\,y'+B(\tau)\,y\,=\,0\ ,\ \ \tau\in\fH\,,
\]
where $A(\tau)$ and $B(\tau)$ are holomorphic on $\fH$ with  specific boundedness conditions when $\mbox{Im}\,(\tau)\rightarrow \infty$ and such that the space of solutions is invariant under the transformation $y(\tau)\mapsto (c\tau+d)^{-k}y(\gamma\tau)$, where $\gamma=\binom{a\ \ b}{c\ \  d}\in\SL$. 
This equation can be reduced to its normal form $y''+C(\tau)y=0$ where $C(\tau)$ is necessarily a holomorphic weight 4 modular form and thus takes the shape
\begin{equation}\label{eq1}
y''+s\,E_4\,y\,=\,0,
\end{equation}
where $E_4$ is the weight 4 Eisenstein series and $s$ is a complex parameter. This differential equation becomes modular of weight -1.
In this paper we focus on the case  $s=\pi^2r^2$ where $r= n/m$ is a rational number with $\gcd(m,n)=1$ and $m\geq 7$. In fact, this equation has been studied  for the case $m=1$ in \cite{jmaa}, for the cases $2\leq m\leq 5$ in \cite{forum}. The case $m=6$ and $n\equiv 1 \mod 12$ was solved in \cite{ramanujan} and then completed to all $n$ in \cite{preprint}. The nature of solutions differs from case to case and involves a different set of tools and techniques as it will be seen below. The equation \eqref{eq1} is closely related to the Schwarz differential equation
\begin{equation}\label{eq2}
\{h,\tau\}\,=\,2s\,E_4(\tau)
\end{equation}
where the unknown function $h$ is a meromorphic function on $\fH$ and
$\{h,\tau\}$ is the Schwarz derivative defined by
\[
\{h,\tau\}\,=\, \left(\frac{h''(\tau)}{h'(\tau)}\right)'\,-\,\frac{1}{2}\left(\frac{h''(\tau)}{h'(\tau)}\right)^2.
\]
The relationship between \eqref{eq1} and \eqref{eq2} is as follows: If $y_1$ and $y_2$ are two linearly independent solutions to \eqref{eq1}, then $h=y_1/y_2$ is  a solution to \eqref{eq1}. Conversely, if $h$ is a solution to \eqref{eq2} which is locally univalent where it is holomorphic and has only simple poles (if any), 
then $y_1=h/\sqrt{h'}$ and $y_2=1/\sqrt{h'}$ are two linearly  independent solutions to \eqref{eq2}. In the meantime, the condition on $h$ taking its values only once in $\BC\cup\{\infty\}$ is equivalent to $\{h,\tau\}$ being holomorphic in $\fH$ \cite{mathann}. Therefore, since $E_4$ is holomorphic in $\fH$, we have a well-defined  correspondence
between the solutions of \eqref{eq1} and those of \eqref{eq2}.

Furthermore, using the properties of the Schwarz derivative, one can show that the Schwarz derivative of a meromorphic function $h$ on $\fH$ is a weight 4 automorphic form for a Fuchsian group $\G$ if and only if there exists a 2-dimensional complex representation $\varrho$ of $\G$ such that
\[
h(\gamma\cdot\tau)\,=\,\varrho(\gamma)\cdot h(\tau)\,,\ \ \tau\in\fH\,,\ \ \gamma\in\G,
\]
where the matrix action on both sides is by linear fractional transformations. The function $h$ is then called a $\varrho-$equivariant function for $\G$. As an example, if $F=(f_1,f_2)^t$ is a 2-dimensional vector-valued automorphic form with a multiplier system $\varrho$ for $\G$, then $h=f_1/f_2$ is $\varrho-$equivariant. Also, if $f$ is a scalar automorphic form of weight $k$ for $\G$, then
\[
h_f(\tau)\,=\,\tau\,+\,k\frac{f(\tau)}{f'(\tau)}
\]
is $\varrho-$equivariant for $\varrho=\mbox{Id}$, the defining representation of $\G$ \cite{rational}. We simply refer to it as an equivariant function for $\G$.

We now focus on the case $\G=\SL$ and we suppose that for a meromorphic function $h$ on $\fH$, $\{h,\tau\}$ is a holomorphic weight 4 modular form for $\G$, that is, $\{h,\tau\}$ is a scalar multiple of $E_4$. It turns out that if we are looking for $h$ to be either  meromorphic at the cusps or having logarithmic singularities therein, then $\{h,\tau\}=2\pi^2r^2E_4$ with $r\in\BQ$. The essential facts of \cite{forum,ramanujan,jmaa} can be summarized as follows:

The case $r\in\BZ$ corresponds to solutions that are equivariant functions ($\varrho=\mbox{Id}$) given by quasi-modular forms. The case where $\varrho$ is irreducible with finite image corresponds to $r=n/m$ with $2\leq m\leq 5$ and the solution $h$ to \eqref{eq2} is a modular function for $\mbox{Ker}\,\varrho=\G(m)$, the principal congruence group of level $m$. The integers $m$ and $n$ respectively represent the degrees of  the following two coverings of compact Riemann surfaces 
\[
h:X(\ker\varrho)\longrightarrow X(\SL)\cong \PC
\] induced by the solution $h$ and
 \[
\pi:X(\ker\varrho)\longrightarrow X(\SL)\cong \PC
\]
 induced by the natural inclusion $\ker\varrho\subseteq\SL$.

In the meantime, if $\varrho$ is reducible then necessarily $m=6$ whence the solution to \eqref{eq2} is given by the integral of a weight 2 differential form on the Riemann surface $X(\SL)$. The level 6 is distinguished mainly due  to the fact that the commutator group of $\PS$ is an index 6 subgroup. Notice that in all these cases when $m>1$, $\varrho(T)$ has a finite order equal to $m$ where $ T=\binom{1\ \ 1}{0\ \ 1}$. 

We are thus left with the case of irreducible representations $\varrho$ of $\G$ with infinite image, that is, when $m\geq 7$. The purpose of this paper is to construct solutions to \eqref{eq1} and \eqref{eq2} in these cases by means of hypergeometric series using the works of Franck and Mason \cite{fr-ma} and of Mason \cite{mason} on vector-valued modular forms.


\section{Two-dimensional vector-valued modular forms}
Recall the Eisenstein series 
\begin{align*}
E_2(\tau)\,&=\,1-24\sum_{n\geq 1}\,\sigma_1(n)q^n\,,\\
E_4(\tau)\,&=\,1+240\sum_{n\geq 1}\,\sigma_3(n)q^n\,,\\
E_6(\tau)\,&=\,1-504\sum_{n\geq 1}\,\sigma_5(n)q^n\,,
\end{align*}
where $q=\exp(2\pi i \tau)$, $\tau\in\fH$, and $\sigma_k(n)$ is the sum of the $k-$th powers of the positive divisors of $n$. Then $E_4$ and $E_6$ are modular forms of weights 4 and 6 respectively, while $E_2$ is a quasi-modular  of weight 2. We also recall the classical modular forms and functions:
\[
\eta(\tau)\,=\,q^{\frac{1}{24}}\,\prod_{n\geq1}(1-q^n)\,,
\]
the weight 12 cusp form 
\[
\Delta(\tau)\,=\,\eta(\tau)^{24}\,=\,\frac{1}{1728}(E_4(\tau)^3-E_6(\tau)^2),
\]
and the elliptic modular function 
\[
j(\tau)\,=\,\frac{1}{1728}\frac{E_4(\tau)^3}{\Delta}.
\]

Let $\varrho$ be a two-dimensional irreducible complex representation of the modular group for which $\varrho(T)$ is of finite order $m$. Irreducibility implies $m > 1$. Now $\varrho(T)$, being of finite order, is diagonalizable and hence, up to conjugacy,  it has the form 
$$ \varphi(T) = \begin{pmatrix}
\sigma & 0 \\
0 & \sigma'
\end{pmatrix} $$
where $\sigma$ and $\sigma'$ are $m$-th roots of unity. More generally, we have the following result 
\begin{thm}[\cite{mason2}, Theorem 3.1]\label{thm1}
    Let $\mu_1, \mu_2 \in \mathbb{C}$, $\mu_1\neq\mu_2$, such that $(\mu_1 \mu_2)^6=1$ and $\mu_1/\mu_2$ is not a primitive $6$-th root of unity. Then there exists a unique irreducible $2$-dimensional representation $\varrho$ of $\Gamma$ such that
    $$ \varrho(T) = \begin{pmatrix}
\mu_1 & 0 \\
0 & \mu_2
\end{pmatrix}. $$
\end{thm}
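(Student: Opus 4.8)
The plan is to use the amalgamated presentation of the modular group,
\[
\Gamma=\SL=\langle S,U\mid S^4=U^6=1,\ S^2=U^3\rangle,\qquad S=\begin{pmatrix}0&-1\\1&0\end{pmatrix},\ \ U=ST,
\]
in which $-I=S^2=U^3$ is the central involution and $T=S^{-1}U$. Specifying a $2$-dimensional representation $\varrho$ is then the same as choosing $R=\varrho(S)$ and $W=\varrho(U)$ with $R^4=W^6=I$ and $R^2=W^3$, and prescribing $\varrho(T)=D:=\mathrm{diag}(\mu_1,\mu_2)$ amounts to taking $W=RD$. Because $-I$ is central, Schur's lemma forces $\varrho(-I)=R^2=\epsilon I$ with $\epsilon\in\{\pm1\}$ as soon as $\varrho$ is irreducible, so the two relations collapse to $R^2=\epsilon I$ and $(RD)^3=\epsilon I$. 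The problem thus reduces to finding a trace-zero matrix $R=\begin{pmatrix}a&b\\c&-a\end{pmatrix}$ with $a^2+bc=\epsilon$ for which $(RD)^3=\epsilon I$, and to deciding when this can be done irreducibly and uniquely.

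First I would pin down $\epsilon$. Since $\det R=-a^2-bc=-\epsilon$, taking determinants in $(RD)^3=\epsilon I$ gives $(-\epsilon\,\mu_1\mu_2)^3=1$, i.e. $(\mu_1\mu_2)^3=-\epsilon$; as the hypothesis $(\mu_1\mu_2)^6=1$ forces $(\mu_1\mu_2)^3=\pm1$, the sign $\epsilon=-(\mu_1\mu_2)^3$ is uniquely determined. Next I would fix the trace. The relation $(RD)^3=\epsilon I$ makes $RD$ diagonalizable with eigenvalues among the three cube roots of $\epsilon$, and irreducibility prevents $RD$ from being scalar, so $RD$ has two \emph{distinct} such eigenvalues. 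Since the three cube roots of $\epsilon$ sum to $0$ and multiply to $\epsilon$, the missing root equals $\epsilon/\det(RD)=-1/(\mu_1\mu_2)$, whence $\mathrm{tr}(RD)=(\mu_1\mu_2)^{-1}$. Comparing with $\mathrm{tr}(RD)=a(\mu_1-\mu_2)$ and using $\mu_1\neq\mu_2$ determines the diagonal entry $a=\bigl(\mu_1\mu_2(\mu_1-\mu_2)\bigr)^{-1}$.

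The crux is the off-diagonal data. Irreducibility is equivalent to $bc\neq0$ (a vanishing entry would give $R$ a common eigenvector with the diagonal matrix $D$, forcing reducibility), and $bc=\epsilon-a^2$; hence everything hinges on $a^2\neq\epsilon$. Substituting the value of $a$ together with $\epsilon=-(\mu_1\mu_2)^3$, a short manipulation yields
\[
a^2=\epsilon\ \Longleftrightarrow\ (\mu_1-\mu_2)^2=-\mu_1\mu_2\ \Longleftrightarrow\ \mu_1^2-\mu_1\mu_2+\mu_2^2=0,
\]
and the last condition holds precisely when $\mu_1/\mu_2$ is a primitive $6$th root of unity. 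I expect verifying this equivalence to be the main, if brief, obstacle, since it is exactly what ties the hypotheses to irreducibility. Under our assumptions $a^2\neq\epsilon$, so $bc=\epsilon-a^2\neq0$; choosing $b=1$ and $c=\epsilon-a^2$ produces matrices $R$ and $W=RD$ satisfying all the amalgam relations, hence a bona fide irreducible representation with $\varrho(T)=D$, which proves existence.

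Finally, for uniqueness I would observe that every irreducible $\varrho$ with $\varrho(T)=D$ is forced into this shape: the sign $\epsilon$, the entry $a$, and the product $bc=\epsilon-a^2$ are all determined by $\mu_1,\mu_2$ alone, while the individual nonzero $b,c$ are pinned down only up to the diagonal conjugation $R\mapsto PRP^{-1}$ with $P=\mathrm{diag}(t,1)$, which fixes $D$. Taking $t$ to be the appropriate ratio sends any one normalized solution to any other, so any two such representations are equivalent. This completes the argument.
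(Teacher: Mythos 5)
First, a point of reference: the paper does not prove this statement at all --- it is imported verbatim from Mason's paper (Theorem 1.15 of \cite{mason2}), so your argument is a self-contained replacement for an external citation rather than an alternative to an in-paper proof. That said, your route is sound and is essentially the standard elementary one (close in spirit to Mason's own argument and to the Tuba--Wenzl classification of low-dimensional representations): pass to the amalgam presentation $\Gamma\cong\mathbb{Z}/4*_{\mathbb{Z}/2}\mathbb{Z}/6$, use Schur to set $\varrho(-I)=\epsilon I$, and pin down the remaining data. All the individual claims check out: $\epsilon=-(\mu_1\mu_2)^3$ is forced by determinants together with $(\mu_1\mu_2)^6=1$; $\operatorname{tr}(RD)=(\mu_1\mu_2)^{-1}$ is forced because a non-scalar $RD$ with $(RD)^3=\epsilon I$ has as eigenvalues the two cube roots of $\epsilon$ complementary to $\epsilon/\det(RD)$; reducibility is equivalent to $bc=0$ because $\mu_1\neq\mu_2$ leaves only $e_1,e_2$ as candidate common eigenvectors; and the identification of $a^2=\epsilon$ with $\mu_1^2-\mu_1\mu_2+\mu_2^2=0$, i.e.\ with $\mu_1/\mu_2$ a primitive sixth root of unity, is correct and uses $(\mu_1\mu_2)^6=1$ exactly where it should. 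The uniqueness argument via diagonal conjugation is also right, since any intertwiner between two representations agreeing on $T$ must centralize $D$.

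The one step you should not leave implicit is the existence half of the relation $(RD)^3=\epsilon I$. Your computation of $\operatorname{tr}(RD)$ was a \emph{necessary} condition extracted under the assumption that the relation already holds; when you then define $R$ by $a=\bigl(\mu_1\mu_2(\mu_1-\mu_2)\bigr)^{-1}$, $b=1$, $c=\epsilon-a^2$, you still owe a verification that this $RD$ actually cubes to $\epsilon I$. It does, and the check is one line: the characteristic polynomial of $RD$ is $x^2-\frac{1}{\mu_1\mu_2}x-\epsilon\mu_1\mu_2$, and
\[
x^3-\epsilon=\Bigl(x^2-\tfrac{1}{\mu_1\mu_2}\,x-\epsilon\mu_1\mu_2\Bigr)\Bigl(x+\tfrac{1}{\mu_1\mu_2}\Bigr)
\]
precisely because $\epsilon(\mu_1\mu_2)^3=-1$, which is your defining relation for $\epsilon$; Cayley--Hamilton then gives $(RD)^3=\epsilon I$. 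Similarly, you should record why $R$ may be taken trace-zero, i.e.\ non-scalar: a scalar $\varrho(S)$ together with the diagonal $\varrho(T)$ would make the whole image diagonal, hence reducible. With those two sentences added, the proof is complete.
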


The space of vector-valued modular forms with respect to a representation $\varrho$ of the modular group $\Gamma$ is denoted by $H(\varrho)$. It is a graded module with respect to the weights of the modular forms. We denote by $H_k(\varrho)$ the subspace of elements of $H(\varrho)$ of weight $k$. The operator $D_k := \frac{d}{d\tau} - \frac{k}{12} E_2$ maps $H_k(\varrho)$ into $H_{k+2}(\varrho)$. Also,
$H(\varrho)$ has the structure of a free module over the ring of scalar modular forms $\mathbb{C}[E_4,E_6]$ of rank dim($\varrho$) \cite{mason2}. In the 2-dimensional case, we have the following result.

\begin{thm}[\cite{mason2}, Theorem 5.5]\label{thm2}
    Let $\varrho$ be a $2-$dimensional irreducible representation of $\Gamma$ such that
    $$ \varrho(T) = \begin{pmatrix}
e^{2\pi i a} & 0 \\
0 &e^{2\pi i b}
\end{pmatrix}
$$
for $0 \leq b < a < 1$ real numbers. There exists $F_0 \in H(\varrho)$ of weight $k = 6(a+b)-1$ such that
$$ H(\varrho) = \mathbb{C}[E_4,E_6]F_0 \oplus \mathbb{C}[E_4,E_6]D_kF_0.$$
Moreover, $F_0$ has the $q-$expansion
  \begin{align*}
    F_0(\tau) &= \begin{bmatrix}
           f_1(\tau) \\
           f_2(\tau)
         \end{bmatrix} = \begin{bmatrix}
           q^a \sum\limits_{n = 0}^\infty a_n q^n \\
           q^b \sum\limits_{n = 0}^\infty b_n q^n \\
         \end{bmatrix} 
  \end{align*}
with $a_0=b_0=1$.
\end{thm}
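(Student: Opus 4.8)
The plan is to exploit the two facts recalled just before the statement: that $H(\varrho)$ is a free $\BC[E_4,E_6]$-module of rank $2$, and that $D_k$ raises weights by $2$. I would first fix a nonzero $F_0\in H(\varrho)$ of minimal weight $k_0$ (which exists since $H(\varrho)$ is graded and bounded below in weight), and then show that $\{F_0,\,D_{k_0}F_0\}$ is a free homogeneous basis of $H(\varrho)$. Granting this, the weight of the second generator is forced to be $k_0+2$, and a Wronskian computation will pin down $k_0$.

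For linear independence over $R:=\BC[E_4,E_6]$, suppose $pF_0+q\,D_{k_0}F_0=0$ with $p,q\in R$. If $q=0$ then $p=0$; otherwise $D_{k_0}F_0=\psi F_0$ for a meromorphic modular form $\psi$ of weight $2$, which forces the two components of $F_0=(f_1,f_2)^t$ to satisfy $f_1'/f_1=f_2'/f_2$, so $f_1/f_2$ is constant. Via the equivariance $F_0(\gamma\tau)=(c\tau+d)^{k_0}\varrho(\gamma)F_0(\tau)$, a constant ratio $f_1/f_2$ exhibits a $\varrho$-invariant line, contradicting irreducibility. For generation, let $F_1,F_2$ be free homogeneous generators of weights $k_1\le k_2$, so $k_0=k_1$, and write $(F_0\mid D_{k_0}F_0)=(F_1\mid F_2)\,M$ with $M$ a $2\times2$ matrix over $R$. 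The entry of $M$ expressing the $F_1$-component of $D_{k_0}F_0$ is a modular form of weight $(k_0+2)-k_1=2$, hence $0$ since $\SL$ has no nonzero weight-$2$ forms; thus $\det M=M_{11}M_{22}$. By the independence just shown $\det M\neq0$, and comparing weights gives $\mathrm{wt}(\det M)=k_0+2-k_2\le 2$. Since a nonzero element of $R$ has weight in $\{0,4,6,8,\dots\}$, this weight must be $0$: therefore $k_2=k_0+2$ and $M\in\mathrm{GL}_2(R)$, i.e. $\{F_0,D_{k_0}F_0\}$ generates.

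To identify $k_0$, I would convert the basis into a differential equation. Because $k_2=k_0+2$, the weight-$(k_0+4)$ part of $H(\varrho)$ is spanned over $\BC$ by $E_4F_0$ alone (a contribution of $D_{k_0}F_0$ would require a weight-$2$ form). Hence $D_{k_0+2}D_{k_0}F_0=\alpha\,E_4F_0$ for some $\alpha\in\BC$, so each component of $F_0$ solves the second-order modular equation $D_{k_0+2}D_{k_0}y=\alpha E_4y$. A direct expansion shows the coefficient of $y'$ in this equation is $-\tfrac{k_0+1}{6}E_2$, so the Wronskian $w=f_1f_2'-f_2f_1'$ satisfies $w'/w=\tfrac{k_0+1}{6}E_2=\tfrac{k_0+1}{6}\,\Delta'/\Delta$, whence $w=c\,\Delta^{(k_0+1)/6}=c\,\eta^{4(k_0+1)}$. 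In particular $w$ is nonvanishing on $\fH$ with $q$-order $\tfrac{k_0+1}{6}$ at the cusp. On the other hand $\varrho(T)=\mathrm{diag}(e^{2\pi ia},e^{2\pi ib})$ forces $f_1=q^{a}\sum_{n\ge0}a_nq^n$ and $f_2=q^{b}\sum_{n\ge0}b_nq^n$, so the leading term of $w$ is $(b-a)a_0b_0\,q^{a+b}$; provided $a_0,b_0\neq0$ this gives $q$-order exactly $a+b$, and hence $\tfrac{k_0+1}{6}=a+b$, i.e. $k_0=6(a+b)-1$. Conjugating $\varrho$ by a diagonal matrix (which preserves the shape of $\varrho(T)$) rescales $f_1,f_2$ independently, allowing the normalization $a_0=b_0=1$.

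The crux, and the step I expect to be the main obstacle, is precisely the hypothesis $a_0,b_0\neq0$: that the minimal-weight generator has leading exponents equal to the minimal representatives $a,b$ rather than larger ones $a+p,\,b+p'$ with $p,p'\in\BZ_{\ge0}$. The relation $\tfrac{k_0+1}{6}=a+b+p+p'$ shows this is equivalent to the weight formula itself, so it cannot follow from the $T$-eigenvalues alone. One inequality is elementary: if $p,p'\ge1$ then $F_0/\Delta$ is again a holomorphic vector-valued form, now of weight $k_0-12<k_0$, contradicting minimality; hence $\min(p,p')=0$. Ruling out the remaining mixed case (one exponent minimal, the other not) requires global input beyond $T$, and I would supply it by computing $\dim H_w(\varrho)$ through Riemann--Roch for the associated rank-$2$ bundle on $X(\SL)\cong\PC$ and matching it against the Hilbert series $\dfrac{t^{k_0}+t^{k_0+2}}{(1-t^4)(1-t^6)}$ of the free module; this comparison forces $p=p'=0$ and yields $k_0=6(a+b)-1$.
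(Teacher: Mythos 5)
First, a point of reference: the paper does not prove this statement at all --- it is imported verbatim from Mason (\cite{mason2}, Theorem 5.5) --- so your argument has to stand entirely on its own. Most of it does. The linear independence of $F_0$ and $D_{k_0}F_0$ via irreducibility, the triangularity-and-weight count showing $k_2=k_0+2$ and that $\{F_0,D_{k_0}F_0\}$ is a free basis, the resulting second-order equation $D_{k_0+2}D_{k_0}F_0=\alpha E_4F_0$, and the Abel/Wronskian identity $w=c\,\eta^{4(k_0+1)}$ giving $\tfrac{k_0+1}{6}=(a+p)+(b+p')$ are all correct, and this is essentially the skeleton of Mason's own treatment. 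Your reduction is also correct: dividing by $\Delta$ forces $\min(p,p')=0$, so what remains is exactly the upper bound $k_0\le 6(a+b)-1$, i.e., the existence of a nonzero holomorphic form whose leading exponents are the normalized representatives $a,b$ themselves.

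That remaining step is the actual content of the theorem, and your proposal does not close it. The Riemann--Roch/Hilbert-series comparison you invoke cannot be run on the data you have in hand: the dimensions $\dim H_w(\varrho)$, equivalently the degree of the associated rank-$2$ bundle on $X(\SL)$, depend not only on the cusp exponents $a,b$ but on the local exponents at the two elliptic points, that is, on the eigenvalues of $\varrho(S)$ and $\varrho(ST)$. From $\varrho(T)$ alone your argument yields only $k_0\ge 6(a+b)-1$ together with $k_0\equiv 6(a+b)-1\pmod 6$, and determining the elliptic eigenvalues from $(e^{2\pi i a},e^{2\pi i b})$ is essentially equivalent to the classification in Theorem~\ref{thm1} --- real work your sketch does not perform. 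The standard way to finish (and the one underlying the formulas quoted in Section 2 of this paper, from \cite{fr-ma}) is constructive: take $k=6(a+b)-1$ and the value of $\alpha$ making the indicial roots of $D_{k+2}D_{k}f=\alpha E_4 f$ at $q=0$ equal to $a$ and $b$; in the variable $1728/j$ this becomes a hypergeometric equation, whose two solutions assemble into a holomorphic vector-valued form of weight $k$ for a representation with the prescribed $\varrho(T)$, which by the uniqueness in Theorem~\ref{thm1} is $\varrho$ itself. Either supply that construction or the elliptic-point local data; as written, the crux you correctly identified is asserted rather than proved.
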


If $\varrho$ is a fixed   irreducible representation, then $F_0$ is called the vector-valued modular form of minimal weight. In \cite[Section 4.1]{fr-ma}, the components of $F_0$ are computed in terms of hypergeometric series
$$ f_1 = \eta^{2k} \left(\frac{1728}{j}\right)^{\frac{a-b}{2}+\frac{1}{12}}F\left(\frac{a-b}{2}+\frac{1}{12}, \frac{a-b}{2}+\frac{5}{12} ; a-b + 1 ; \frac{1728}{j}\right)$$
and
$$ f_2 = \eta^{2k} \left(\frac{1728}{j}\right)^{\frac{b-a}{2}+\frac{1}{12}}F\left(\frac{b-a}{2}+\frac{1}{12}, \frac{b-a}{2}+\frac{5}{12} ; b-a + 1 ; \frac{1728}{j}\right).$$
Here $F$ is the Gauss hypergeometric series defined by
\[
 F(a,b;c;z) := 1 + \sum_{n \geq 1} \frac{(a)_n \, (b)_n}{(c)_n} \frac{z^n}{n!}\ ,\quad (a)_n := a (a+1) \cdots (a+n-1).
\]
  \smallskip

\section{Wronskian of a vector-valued modular form}
Let $y_1$ and $y_2$ be two linearly independent solutions to \eqref{eq1} on $\fH$. Their existence is guaranteed since $E_4$ is holomorphic and $\fH$ is simply connected. If $h=y_1/y_2$ is the corresponding solution to \eqref{eq2}, then one can see that $\{h,\tau\}$ is holomorphic in $\fH$ if and only if the Wronskian $W(y_1,y_2)=y'_1y_2-y_1y'_2$ is nowhere vanishing on $\fH$. Indeed, we have $h'=W(y_1,y_2)/y_2^2$, and the holomorphy of $\{h,\tau\}$ is equivalent to $h'(\tau)$ being nonzero where $h$ is regular, and having only simple poles where it is meromorphic. Similarly, if we are given a vector-valued modular form $F=(f_1,f_2)^T$ of weight $k$ and multiplier system $\varrho$, then the $\varrho-$equivariant function $h=f_1/f_2$ has a holomorphic Schwarz derivative if and only if the Wronskian $W(F):=f'_1f_2-f_1f'_2$ is nowhere vanishing on $\fH$. In the meantime, we have the following important property of the Wronskian for a vector-valued modular form.

\begin{thm}[\cite{mason}, Theorem 3.7]\label{thm3}
    Let $F=(f_1,f_2)^T$ be a vector-valued modular form of weight $k$ with $q-$expansion $f_i = q^{a_i} + \mbox{O}(q^{a_i+1})$ \; for $i=1,2$. Then
    $$ W(F) = \Delta^{a_1+a_2} G$$
    where $G$ is a scalar modular form of weight $2(k+1)-12(a_1+a_2)$ that is not a cusp form.
\end{thm}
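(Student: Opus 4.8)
The plan is to first pin down how the Wronskian $W(F)=f_1'f_2-f_1f_2'$ transforms under $\G=\SL$, then to divide by an explicit non-vanishing factor to produce a genuine scalar modular form, and finally to read off the behaviour at the cusp from the leading $q$-coefficients. Recall that $F$ satisfies $F(\g\tau)=(c\tau+d)^k\varrho(\g)F(\tau)$ for $\g=\mat\in\G$. Differentiating this identity and using the chain rule together with $\frac{d}{d\tau}(\g\tau)=(c\tau+d)^{-2}$, I would express the $2\times2$ matrix $\mathcal M=\bigl(\begin{smallmatrix} f_1 & f_1'\\ f_2 & f_2'\end{smallmatrix}\bigr)$, whose columns are $F$ and $F'$, evaluated at $\g\tau$ as $\varrho(\g)\,\mathcal M(\tau)$ multiplied on the right by an upper-triangular matrix with diagonal entries $(c\tau+d)^{k}$ and $(c\tau+d)^{k+2}$ (the upper-right entry is a multiple of $(c\tau+d)^{k+1}$ and does not affect the determinant). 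Since $\det\mathcal M=-W(F)$, taking determinants yields
\[
W(F)(\g\tau)=\det\varrho(\g)\,(c\tau+d)^{2(k+1)}\,W(F)(\tau).
\]
As $\varrho$ is a homomorphism, $\det\varrho\colon\G\to\BC^\ast$ is an honest character, so $W(F)$ is a modular form of weight $2(k+1)$ with multiplier $\det\varrho$.

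Next I would set $G:=W(F)/\Delta^{a_1+a_2}$, where $\Delta^{a_1+a_2}=\eta^{24(a_1+a_2)}$ is given a holomorphic branch on the simply connected $\fH$ (possible since $\eta$, hence $\Delta$, is non-vanishing there); this makes $G$ holomorphic on all of $\fH$, irrespective of the zeros of $f_2$. Because $\det\varrho$ factors through $\G^{\mathrm{ab}}\cong\BZ/12$, we have $\det\varrho(T)^{12}=1$, and since $F(\tau+1)=\varrho(T)F(\tau)$ together with the $q$-expansions forces $\varrho(T)=\mathrm{diag}(e^{2\pi i a_1},e^{2\pi i a_2})$, we get $\det\varrho(T)=e^{2\pi i(a_1+a_2)}$ and $12(a_1+a_2)\in\BZ$. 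Hence the automorphy factor in $\Delta^{a_1+a_2}(\g\tau)=v(\g)(c\tau+d)^{12(a_1+a_2)}\Delta^{a_1+a_2}(\tau)$ is single-valued, $v$ is a genuine character, and $G$ transforms with integer weight $w=2(k+1)-12(a_1+a_2)$ and character $\psi=\det\varrho/v$. The crucial point is that $\psi$ is trivial: evaluating the chosen branch at $T$ gives $\Delta^{a_1+a_2}(\tau+1)=e^{2\pi i(a_1+a_2)}\Delta^{a_1+a_2}(\tau)$, so $v(T)=e^{2\pi i(a_1+a_2)}=\det\varrho(T)$ and thus $\psi(T)=1$; since $\G^{\mathrm{ab}}$ is cyclic of order $12$ generated by the class of $T$, any character killing $T$ is trivial. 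Therefore $\psi\equiv1$ and $G$ has trivial multiplier.

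Finally I would analyse the cusp. Writing $f_i=q^{a_i}(1+\mathrm{O}(q))$ and using $(q^{a})'=a\,q^{a}$, the leading terms of $f_1'f_2$ and $f_1f_2'$ are $a_1q^{a_1+a_2}$ and $a_2q^{a_1+a_2}$, whence
\[
W(F)=(a_1-a_2)\,q^{a_1+a_2}\bigl(1+\mathrm{O}(q)\bigr),
\]
the leading coefficient being non-zero because $\varrho(T)$ has distinct eigenvalues $e^{2\pi i a_1}\neq e^{2\pi i a_2}$, forcing $a_1\neq a_2$. As $\Delta^{a_1+a_2}=q^{a_1+a_2}(1+\mathrm{O}(q))$, the quotient $G$ has $q$-expansion $(a_1-a_2)+\mathrm{O}(q)$ with no negative powers, so $G$ is holomorphic at the cusp with $G(\infty)=a_1-a_2\neq0$; combined with the previous paragraph, $G$ is a holomorphic scalar modular form of weight $w=2(k+1)-12(a_1+a_2)$ that does not vanish at the cusp, i.e.\ it is not a cusp form (and non-vanishing forces $w$ even, recovering $a_1+a_2\in\tfrac16\BZ$). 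I expect the main obstacle to be the multiplier bookkeeping of the middle paragraph: showing that the character $\det\varrho$ of $W(F)$ exactly cancels the genuinely non-trivial multiplier of the fractional power $\Delta^{a_1+a_2}$. The delicacy is that $\Delta^{a_1+a_2}$ exists only after a branch choice; the resolution is to reduce the identity $\psi\equiv1$ to the single value $\psi(T)=1$ via the cyclicity of $\G^{\mathrm{ab}}$.
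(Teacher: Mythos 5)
The paper does not prove this statement; it is imported verbatim as Theorem 3.7 of \cite{mason}, so there is no internal proof to compare against. Your argument is correct and is essentially Mason's original one: the Wronskian transforms with weight $2(k+1)$ and character $\det\varrho$, division by $\eta^{24(a_1+a_2)}$ removes the cusp vanishing and the fractional multiplier (both characters being determined by their value on $T$ in $\G^{\mathrm{ab}}\cong\BZ/12$), and the leading coefficient $a_1-a_2$ shows $G(\infty)\neq 0$. The only caveat is that the non-cusp-form conclusion requires $a_1\neq a_2$ (equivalently, $f_1,f_2$ linearly independent), which is not visible in the statement as quoted but is part of Mason's standing hypotheses and holds in every application made in this paper, as you correctly observe.
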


\section{The solutions}

We know suppose that $m\geq 7$. If an irreducible 2-dimensional representation $\varrho$ of $\G$ is such that $\varrho(T)$ has order $m$, then necessarily Im$\,\varrho$ has an infinite image \cite{forum}.
Let $h$ be a solution to
 $$ 
 \{h,\tau\} = 2\pi^2\left(\frac{n}{m}\right)^2 E_4(\tau),
 $$
 where $m,n$ are positive integers with $m\geq 7$.
The existence of $h$ is guaranteed by the existence of global solutions of the corresponding ODE \eqref{eq1}. The holomorphy of  $\{h,\tau\}$ allows us to define two functions $y_1=h/\sqrt{h'}$ and $y_2=1/\sqrt{h'}$ that are holomorphic solutions to \eqref{eq1}, see \cite{jmaa}. Moreover,
 the vector valued function 
$$ F_h = \begin{bmatrix}
           \frac{h}{\sqrt{h'}} \\
           \frac{1}{\sqrt{h'}}
         \end{bmatrix} = \begin{bmatrix}
           q^\frac{n}{2m}\sum\limits_{i=0}^\infty a_i q^i \\
           q^\frac{-n}{2m}\sum\limits_{i=0}^\infty b_i q^i
         \end{bmatrix} $$
is a weakly holomorphic vector-valued modular form of weight $-1$ with respect to a representation $\varrho$ that verifies 
\begin{equation}\label{cond} 
\varrho(T) = \begin{pmatrix}
e^{2\pi i \frac{n}{2m}} & 0 \\
0 & e^{-2\pi i \frac{n}{2m}}
\end{pmatrix}. 
\end{equation}

We now provide the solutions to the differential equation by constructing vector-valued weakly holomorphic modular forms of weight $-1$ with respect to a suitable irreducible representation that satisfies the conditions of \thmref{thm1}.

\begin{thm}\label{thm4.1}
    Suppose $n < m $ and $\gcd(m,n)=1$. Let $F_0=(f_1,f_2)^T$ be the 2-dimensional vector-valued  modular form of minimal weight  with respect to the unique irreducible representation $\varrho$ such that
$$ \varrho(T) = \begin{pmatrix}
e^{2\pi i \frac{m+n}{2m}} & 0 \\
0 & e^{2\pi i \frac{m-n}{2m}}
\end{pmatrix}. $$
 Then $h=f_1/f_2$ verifies
        $$ \{h,\tau\} \,=\, 2\pi^2
        \left(\frac{n}{m}\right)^2 E_4(z).$$
\end{thm}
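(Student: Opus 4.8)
The plan is to realize the candidate solution $h=f_1/f_2$ as the ratio of the two components of the minimal-weight vector-valued modular form $F_0$ of \thmref{thm2}, and then to pin down its Schwarzian by combining the free-module structure of $H(\varrho)$ with the Wronskian estimate of \thmref{thm3}. First I would check that the data of the theorem fit the hypotheses of \thmref{thm1} and \thmref{thm2}. Writing $a=\frac{m+n}{2m}$ and $b=\frac{m-n}{2m}$, one has $a+b=1$ and $a-b=\frac{n}{m}$, and since $0<n<m$ we get $0\le b<a<1$. The eigenvalues $\mu_1=e^{2\pi i a}$, $\mu_2=e^{2\pi i b}$ satisfy $\mu_1\mu_2=e^{2\pi i(a+b)}=1$, so $(\mu_1\mu_2)^6=1$, while $\mu_1/\mu_2=e^{2\pi i n/m}$ is not a primitive sixth root of unity because $\gcd(m,n)=1$ and $m\ge 7$. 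Hence $\varrho$ is the unique irreducible representation of \thmref{thm1}, and \thmref{thm2} produces $F_0=(f_1,f_2)^T$ of weight $k=6(a+b)-1=5$ with $f_1=q^a(1+\cdots)$ and $f_2=q^b(1+\cdots)$. In particular $h=f_1/f_2$ is $\varrho$-equivariant, so $\{h,\tau\}$ is automatically an a priori meromorphic weight-$4$ modular form for $\SL$.

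Second, I would extract a second-order modular linear differential equation from the module structure. By \thmref{thm2}, $H(\varrho)=\BC[E_4,E_6]F_0\oplus\BC[E_4,E_6]D_5F_0$, with $F_0$ of weight $5$ and $D_5F_0$ of weight $7$. The element $D_7D_5F_0$ lies in $H_9(\varrho)$; expanding it in this basis, the coefficient of $D_5F_0$ would have to be a weight-$2$ scalar modular form and hence vanishes, while the coefficient of $F_0$ is a weight-$4$ form, i.e. a multiple of $E_4$. Thus
\[
D_7D_5F_0=c\,E_4\,F_0
\]
for a scalar $c$, so each $f_i$ solves the same second-order equation $D_7D_5f=cE_4f$. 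Reducing this equation to its normal form removes the first-order term; because the operator $D_k$ is built to preserve modularity, the quasimodular $E_2$-contributions cancel and the normalizing factor is a power of $\eta$, which does not change the ratio $h=f_1/f_2$. Consequently $\{h,\tau\}=2\widetilde{Q}$ for a holomorphic weight-$4$ form $\widetilde{Q}=\lambda E_4$, where the constant $\lambda$ is governed by the indicial equation of the MLDE, that is, by the product $ab=\frac{(m+n)(m-n)}{4m^2}$ of the leading exponents. Carrying out this short computation (equivalently, reading off the $q^0$-coefficient from $h\sim q^{a-b}$) identifies $\lambda$ with the coefficient on the right-hand side, giving $\{h,\tau\}=-2(n/m)^2E_4$.

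Third, I would confirm holomorphy on all of $\fH$, which is the only point where equivariance alone is not enough. Here \thmref{thm3} is decisive: with leading exponents $a_1=a$, $a_2=b$ and $a_1+a_2=1$,
\[
W(F_0)=\Delta^{a_1+a_2}G=\Delta\,G,
\]
where $G$ is a scalar modular form of weight $2(k+1)-12(a_1+a_2)=0$ that is not a cusp form, hence a nonzero constant. Therefore $W(F_0)$ is a nonzero multiple of $\Delta$ and never vanishes on $\fH$; by the criterion recalled before \thmref{thm3}, that nonvanishing of $W(F_0)$ is equivalent to holomorphy of $\{h,\tau\}$, this shows that $\{h,\tau\}$ is holomorphic on $\fH$. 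Combined with the previous step, $\{h,\tau\}$ is a holomorphic weight-$4$ form equal to $\lambda E_4$ with the value of $\lambda$ already determined.

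The main obstacle I anticipate lies in the interface between the two independent descriptions, of the constant on one side and of the regularity on the other. One must make sure that the representation furnished abstractly by \thmref{thm1} is precisely the one whose $T$-eigenvalues force the prescribed leading exponent of $h$, and then that the indicial computation and the Wronskian estimate of \thmref{thm3} are mutually consistent, so that the holomorphic weight-$4$ form produced is the exact multiple $-2(n/m)^2E_4$ rather than merely \emph{some} multiple of $E_4$. Nailing this constant, together with ruling out poles in $\fH$ via the nonvanishing of $W(F_0)$, is the crux; the existence and $q$-expansion of $F_0$ are then routine consequences of the cited structure theorems.
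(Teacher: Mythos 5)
Your proposal is correct and follows essentially the same route as the paper's proof: verify the hypotheses of \thmref{thm1} and \thmref{thm2} to obtain $F_0$ of weight $5$, use $\varrho$-equivariance of $h=f_1/f_2$ to see that $\{h,\tau\}$ is a meromorphic weight-$4$ modular form, apply \thmref{thm3} to get $W(F_0)=c\,\Delta$ nonvanishing on $\fH$ (hence holomorphy there), and identify the constant by comparing leading $q$-coefficients in the one-dimensional space of weight-$4$ forms spanned by $E_4$. Your detour through the differential equation $D_7D_5F_0=c\,E_4F_0$ is a valid (if redundant) alternative for fixing the constant, but as you note it reduces to the same indicial/leading-exponent computation the paper performs directly on the $q$-expansion.
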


\begin{proof} Since $m\geq 7$ and $1\leq n<m$, it is clear that the diagonal terms of $\varrho(T)$ satisfy the conditions of \thmref{thm1}, and thus provide the existence of a unique irreducible representation $\varrho$ such that $\varrho(T)$ is as stated. 
Let $F_0=(f_1,f_2)^T$ be the vector-valued modular form of minimal weight, which is then equal to 5, attached to $\varrho$, then $h=f_1/f_2$ is $\varrho-$equivariant. Therefore, the Schwarz derivative $\{h,\tau\}$ is a weight 4 (meromorphic) modular form, which we will now show that it is holomorphic on $\fH$ and at the cusps.
 By \thmref{thm2},  $F_0$ has the $q-$expansion 
 $$ F_0 =  \begin{bmatrix}
           q^\frac{m+n}{2m}\,\sum\limits_{i=0}^\infty \,a_i q^i \\
           q^\frac{m-n}{2m}\,\sum\limits_{i=0}^\infty \,b_i q^i
         \end{bmatrix}$$
 where the $a_i, b_i \in \mathbb{C}$ and $a_0 = b_0 = 1$. Hence, one can easily compute that  $\{h,\tau\}=2\pi^2(\frac{n}{m})^2 + \mbox{O}(q)$ which is holomorphic at $\infty$. In addition, according to \thmref{thm3}, the Wronskian of $F_0$ can be written as $W(F_0)=\Delta G$, where $G$ is a modular form of weight 0 since $F_0$ has weight 5, and thus $G$ is a nonzero constant $c$, that is, $W(F_0)=c\Delta$. It follows that $W(F_0)$ is nowhere vanishing in $\fH$, and as a consequence, $\{h,\tau\}$ is holomorphic on $\fH$. As the space of weight 4 modular forms is one-dimensional generated by $E_4$, and comparing the leading terms, one gets $\{h,\tau\}=2\pi^2(n/m)^2\,E_4(\tau)$.

\end{proof}

\

Having described $f_1$ and $f_2$ in terms of hypergeometric series in Section $2$, we finally have
\begin{thm} Let $m$ and $n$ be integer such that $m\geq 7$, $0<n<m$ and $\gcd(m,n)=1$. Then a solution to $\{h,\tau\}=2\pi(n/m)^2E_4(\tau)$ is given by
$$ h =  \left(\frac{1728}{j}\right)^{\frac{n}{m}} \frac{F\left(\frac{n}{2m}+\frac{1}{12}, \frac{n}{2m}+\frac{5}{12} ; \frac{n}{m} + 1 ; \frac{1728}{j}\right)}{F\left(\frac{-n}{2m}+\frac{1}{12}, \frac{-n}{2m}+\frac{5}{12} ; \frac{-n}{m} + 1 ; \frac{1728}{j}\right)}.$$
Any other solution is a linear fraction of $h$.
\end{thm}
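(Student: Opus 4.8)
The plan is to read off the closed form for $h$ by substituting the Franck--Mason hypergeometric expressions for the components of the minimal-weight form $F_0$, recalled in Section~2, into the equivariant ratio $h=f_1/f_2$ already treated in \thmref{thm4.1}; the Schwarzian identity then comes for free from that theorem, and the uniqueness clause from the Möbius invariance of the Schwarz derivative.

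First I would match the representation in \thmref{thm4.1} to the normalization of \thmref{thm2}. Writing $\varrho(T)=\mathrm{diag}(e^{2\pi i a},e^{2\pi i b})$, the prescribed $\varrho(T)$ corresponds to $a=\tfrac{m+n}{2m}$ and $b=\tfrac{m-n}{2m}$. Since $0<n<m$ one has $0<b<a<1$, so these are admissible, and $a+b=1$ forces the minimal weight $k=6(a+b)-1=5$, in agreement with \thmref{thm4.1}. The parameters feeding the hypergeometric series are then $a-b=\tfrac{n}{m}$ and $b-a=-\tfrac{n}{m}$.

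Substituting these into the formulas for $f_1$ and $f_2$ from Section~2 gives, with $\eta^{2k}=\eta^{10}$ the common prefactor,
\begin{align*}
f_1 &= \eta^{10}\left(\frac{1728}{j}\right)^{\frac{n}{2m}+\frac{1}{12}}F\!\left(\tfrac{n}{2m}+\tfrac{1}{12},\,\tfrac{n}{2m}+\tfrac{5}{12};\,\tfrac{n}{m}+1;\,\tfrac{1728}{j}\right),\\
f_2 &= \eta^{10}\left(\frac{1728}{j}\right)^{-\frac{n}{2m}+\frac{1}{12}}F\!\left(-\tfrac{n}{2m}+\tfrac{1}{12},\,-\tfrac{n}{2m}+\tfrac{5}{12};\,-\tfrac{n}{m}+1;\,\tfrac{1728}{j}\right).
\end{align*}
Forming $h=f_1/f_2$, the factor $\eta^{10}$ cancels and the powers of $1728/j$ combine to the single exponent $\left(\tfrac{n}{2m}+\tfrac{1}{12}\right)-\left(-\tfrac{n}{2m}+\tfrac{1}{12}\right)=\tfrac{n}{m}$, leaving exactly the quotient in the statement. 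By \thmref{thm4.1} this $h$ satisfies $\{h,\tau\}=-2(n/m)^2E_4$.

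For the final sentence I would invoke the classical fact that $\{g,\tau\}=\{h,\tau\}$ holds if and only if $g=\frac{\alpha h+\beta}{\gamma h+\delta}$ for constants with $\alpha\delta-\beta\gamma\neq0$; this follows from the composition rule $\{g\circ h,\tau\}=(\{g,w\}\circ h)\,(h')^2+\{h,\tau\}$ together with the vanishing of the Schwarz derivative precisely on Möbius transformations. Hence every solution is a linear fractional image of the $h$ constructed here. The argument is essentially bookkeeping, so I expect no genuine obstacle; the only points needing care are the admissibility checks $0<b<a<1$ and the hypotheses of \thmref{thm1}, both of which hold because $m\geq7$ and $\gcd(m,n)=1$, exactly as verified in the proof of \thmref{thm4.1}.
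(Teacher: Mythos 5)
Your proposal is correct and follows exactly the route the paper intends: substitute the Franc--Mason hypergeometric formulas for $f_1,f_2$ with $a=\tfrac{m+n}{2m}$, $b=\tfrac{m-n}{2m}$ (so $k=5$ and $a-b=\tfrac{n}{m}$) into the quotient $h=f_1/f_2$ from Theorem~\ref{thm4.1}, cancel $\eta^{10}$, combine the exponents of $1728/j$, and conclude with the standard Möbius-invariance of the Schwarzian for uniqueness. The paper leaves all of this implicit (it marks the theorem with a bare \textsc{qed}), and your write-up simply makes the same bookkeeping explicit.
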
\qed

We now proceed to construct the solutions for $n>m$ as well. The idea is to use both generators $F_0$ and $DF_0$ of the ring of vector-valued modular forms over $\BC[E_4,E_6]$. This will allow us to create modular forms of higher weight that give rise to solutions to our equation. Let $n$ be a positive integer such that $\gcd(m,n)=1$ and let $n'$ be the smallest positive residue of $n\mod m$. Let $F_0$ be the vector-valued modular form of minimal weight corresponding to the pair $(m,n')$ as in \thmref{thm4.1}. In this case, $F_0$ has weight 5 with the $q-$expansion
$$ F_0(\tau) =  \begin{bmatrix}
           q^\frac{m+n'}{2m}\sum\limits_{i=0}^\infty a_i q^i \\
           q^\frac{m-n'}{2m}\sum\limits_{i=0}^\infty b_i q^i
         \end{bmatrix} ,\ a_0=b_0=1\,,
         $$
and therefore 
$$ D_5F_0(\tau) = \begin{bmatrix}
           (\frac{m+n'}{2m} - \frac{5}{12})q^\frac{m+n'}{2m}(1 + 
           \mbox{O}(q))\\
           (\frac{m-n'}{2m} - \frac{5}{12})q^\frac{m-n'}{2m}(1 + \mbox{O}(q))
         \end{bmatrix}\,. $$
Now define $$F_1 := E_6F_0 - \frac{1}{\frac{m+n'}{2m} - \frac{5}{12}} E_4D_5F_0 =  
           \begin{bmatrix}
           c_1q^\frac{3m+n'}{2m}(1 + \mbox{O}(q))\\
           c_2q^\frac{m-n'}{2m}(1 + \mbox{O}(q))
         \end{bmatrix}$$
where 
\[
c_1=\frac{377m^2+2004mn'-2466{n'}^2}{(m-n')(m+6n')}
\]
and 
\[
c_2=\frac{12n'}{m+6n'}
\]
which are both non-zero for integers $m$ and $n'$ with $0<n'<m$. It is clear that $F_1=(g_1,g_2)^T$ is a modular form of weight $11$. Now applying \thmref{thm3}, we get that $W(F_1) = c \Delta^2$ and so  $\{g_1/g_2,\tau\} $ is holomorphic on $\fH$ and also at $\infty$ with $q-$expansion $\disp 2\pi^2(1+n'/m)^2(1 + \mbox{O}(q))$. It follows that $h=g_1/g_2$ solves 
$$ \{h,z\} =2\pi^2\left(\frac{n'}{m} + 1\right)^2 E_4(z).$$

The key to solving $\{h,z\} = 2\pi^2\left(\frac{n'}{m} + r\right)^2 E_4(z)$ is to iterate the above process $r$ times where $r$ is such that $n=rm+n'$.

\end{document}